\documentclass[12pt,a4paper]{amsart}
\usepackage[T1]{fontenc}
\usepackage[utf8]{inputenc}
\usepackage{lmodern}
\usepackage{microtype}
\usepackage[a4paper,margin=27mm]{geometry}
\usepackage{amsmath,amssymb,mathtools}
\usepackage[hidelinks]{hyperref}
\usepackage[dvipsnames]{xcolor}            
\usepackage[normalem]{ulem}

\hypersetup{pdftitle={On the power of adaption and randomization 2: Non-symmetric convex sets},
 pdfauthor={David Krieg, Erich Novak, Mario Ullrich}}
\numberwithin{equation}{section}
\newtheorem{theorem}{Theorem}[section]
\newtheorem{lemma}[theorem]{Lemma}
\newtheorem{proposition}[theorem]{Proposition}

\theoremstyle{remark}
\newtheorem{remark}[theorem]{Remark}
\DeclareMathOperator{\vol}{vol}

\DeclareMathOperator{\aff}{aff}
\DeclareMathOperator{\conv}{conv}
\DeclareMathOperator{\spanop}{span}
\newcommand{\R}{\mathbb R}
\newcommand{\E}{\mathbb E}
\newcommand{\eada}{e^{\mathrm{ada}}}
\newcommand{\eran}{e^{\mathrm{ran}}}
\newcommand{\enon}{e^{\mathrm{non}}}
\newcommand{\one}{\mathbf 1}
\newcommand{\norm}[1]{\lVert#1\rVert}
\allowdisplaybreaks[1]

\title[On the power of adaption and randomization 2]{On the power of adaption and randomization 2:\\ Non-symmetric convex sets}
\author{David Krieg}
\author{Erich Novak}
\author{Mario Ullrich}
\date{October 24, 2026}
\subjclass[2020]{41A46, 41A65, 65Y20}
\keywords{Adaption, randomization, affine approximation, convex sets, Grothendieck determinants, uniform probability measures}

\begin{document}
\begin{abstract}
Let $S:X\to Y$ be a bounded linear operator between real Banach spaces,
and let $F\subset X$ be nonempty, bounded and convex. We prove
\[
 a_{2n-1}(S,F)\le 27n\min\{\eran_n(S,F),\eada_n(S,F)\},\qquad n\ge1.
\]
Here $a_k$ is the minimal worst-case error of continuous affine algorithms
of rank at most $k$, and $\eran_n$ and $\eada_n$ are the minimal errors of
adaptive randomized and deterministic algorithms using at most $n$
continuous linear measurements. 
This shows that the gain of adaption and randomization, and even
nonlinearity, is limited by a factor $27n$ for approximation in convex sets.
We discuss the optimality of the result by examples.
\end{abstract}
\maketitle

\section{Introduction and main result}\label{sec:intro-new}

Throughout, $X$ and $Y$ are real Banach spaces, $S\colon X\to Y$ is a
bounded linear operator, and $F\subset X$ is nonempty, bounded and
convex. We want to approximate $Sf$ for $f\in F$ in the norm of $Y$,
using at most $n$ values of continuous linear functionals of $f$, and
we compare the minimal worst-case errors of nonadaptive, adaptive and
randomized algorithms. For symmetric $F$, that is $F=-F$, these
relations are well understood; without symmetry, the known bounds are
weaker by a factor $\sqrt n$. We close this gap.

A \emph{deterministic algorithm using $n$ measurements} is a map
$A\colon X\to Y$ of the form
\begin{equation}\label{eq:algorithm}
 A(f)=\varphi(y_1,\ldots,y_n),\qquad
 y_i=\psi_i(y_1,\ldots,y_{i-1})(f),\quad i=1,\ldots,n,
\end{equation}
with arbitrary maps $\psi_i\colon\R^{i-1}\to X'$ and $\varphi\colon\R^n\to Y$,
where $X'$ is the dual space of $X$. The $i$th functional may depend on
the previous values; this is adaption. The algorithm is
\emph{nonadaptive} if all $\psi_i$ are constant, that is, $y_i=L_i(f)$
for fixed $L_1,\ldots,L_n\in X'$, and \emph{affine} if, in addition,
$\varphi$ is affine, that is, $A(f)=y_0+\sum_{i=1}^nL_i(f)\,v_i$ with
$y_0,v_i\in Y$. We write
\[
 \eada_n(S,F),\qquad\enon_n(S,F),\qquad a_n(S,F)
\]
for the infimum of $\sup_{f\in F}\norm{Sf-A(f)}_Y$ over all
deterministic, all nonadaptive and all affine algorithms using $n$
measurements, respectively; clearly $\eada_n\le\enon_n\le a_n$.
For the unit ball $F$ of $X$, the $a_n$ are the approximation numbers
of $S$; see~\cite{KNU2025,Mathe1990,TWW1988} for s-numbers in
information-based complexity.

A \emph{randomized algorithm using $n$ measurements} is a family
$(A_\omega)_{\omega\in\Omega}$ of deterministic algorithms
using $n$ measurements, indexed by a probability space
$(\Omega,\mathcal A,\mathbb P)$; the randomness is independent of the
input. We call it \emph{measurable} if, restricted to any
finite-dimensional affine subspace of $X$, the decision rules
$\psi_i^\omega$, the reconstruction $\varphi^\omega$ and the error
$\norm{Sf-A_\omega(f)}_Y$ are jointly measurable in $\omega$ and in
the observed values, respectively in $\omega$ and $f$. This is all
that is used in Section~\ref{sec:proof}. Then
\[
 \eran_n(S,F)=\inf\,\sup_{f\in F}\E\norm{Sf-A_\omega(f)}_Y
\]
is the infimum over all measurable randomized algorithms using $n$
measurements. Since $\eada_n$ is defined without any measurability,
the inequality $\eran_n\le\eada_n$ is not claimed and the deterministic
case is treated separately.

Let us recall what is known; see also~\cite{KNU2025,Novak1996} for earlier surveys. 
The deterministic adaption problem in this setting
was studied by Novak~\cite{Novak1995}
who used the Bernstein numbers $b_n$ as a main tool and proved $\enon_n \le C n^2 \eada_n$.  
For randomized algorithms, Kunsch~\cite{Kunsch2017} proved the lower bound $\eran_n \ge \frac1{30}b_{2n-1}$. 
Together these results show that 
$\eran_n$ and $\eada_n$ can gain at most a factor $Cn^2$ over $\enon_{2n-1}$ 
with a constant $C$. 
In particular, adaption and randomization can increase the order of convergence by no more than~2.
The first part of this work~\cite{KNU2025} reduced the exponent to $3/2$ 
by extending a bound between Gelfand numbers 
and Hilbert numbers~$h_n$ due to Pietsch~\cite{Pietsch-ideals} 
to non-symmetric sets, see also~\cite{U24}. 
The bound was further improved by Ullrich~\cite[Theorem~7]{Ullrich2026}, which gives
$a_{2n-1}\le C n^{3/2} \min \{\eran_n,\eada_n\}$ with a constant $C$,
and with $n$ in place of $n^{3/2}$ for Hilbert targets $Y$, or if $F$ is symmetric. 

We reduce this factor to order $n$,
and to order $\sqrt n$ for Hilbert targets.
To obtain these results, we require a new lower bound for randomized and adaptive algorithms. 
We cannot use $h_n$ 
since the factor $n^{3/2}$ in the comparison $a_n\le Cn^{3/2}h_n$ from \cite{Ullrich2026} 
is needed,
see~\cite{DU2026}. 
Whether we can use $b_n$ is unclear, 
since we do not know whether $a_n \le C n b_n$ holds for general convex sets $F$.
As a replacement, 
we compare  both $a_{2n-1}$ and the
minimal errors $\eran_n$ and $\eada_n$ with 
the affine determinants $D_k$
introduced below.
We obtain the following result. 

\begin{theorem}\label{thm:main}
Let $X$ and $Y$ be real Banach spaces, $S\colon X\to Y$ a bounded linear
operator, and $F\subset X$ nonempty and convex with $S(F)$ bounded.
For every $n\ge1$,
\begin{equation}\label{eq:main2}
 \enon_{2n-1}(S,F)\le a_{2n-1}(S,F)
 \le27n\min\{\eran_n(S,F),\eada_n(S,F)\}.
\end{equation}
If $Y$ is a Hilbert space, then
\begin{equation}\label{eq:main2-hilbert}
 a_{2n-1}(S,F)\le\frac{27}{2}\sqrt n\,
           \min\{\eran_n(S,F),\eada_n(S,F)\}.
\end{equation}
\end{theorem}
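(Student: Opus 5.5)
The plan is to pass through the affine determinants $D_k$ announced in the introduction. I take them as Grothendieck-type quantities for the affine structure of $F$:
\[
 D_k(S,F)=\sup\Bigl(\bigl|\det\bigl(\langle y'_i, S(f_j-f_0)\rangle\bigr)_{i,j=1}^k\bigr|\Bigr)^{1/k},
\]
where the supremum runs over $f_0,\dots,f_k\in F$ and $y'_1,\dots,y'_k$ in the unit ball of $Y'$. This is a non-symmetric analogue of Grothendieck numbers: simplices in $F$ replace symmetric parallelepipeds. The proof then reduces to two inequalities.

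The first is an upper bound for $a_{2n-1}$,
\[
 a_{2n-1}(S,F)\le c_1\, n\, D_n(S,F), \qquad\text{or}\quad a_{2n-1}\le c_1\sqrt n\,D_n \text{ for Hilbert } Y .
\]
The second is a lower bound $\min\{\eran_n,\eada_n\}\ge c_2 D_n$, with $c_1/c_2\le 27$.

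\textbf{Step 1 (upper bound).} I use a greedy, Auerbach-type construction. First reduce to $S(F)$ finite dimensional by approximating $F$ with convex hulls of finite sets; this is allowed because $a_k$ is an infimum and $S(F)$ is bounded. Then choose $f_0,\dots,f_m\in F$ and functionals $y'_i$ that (nearly) maximize the determinant. Cramer's rule gives, for every $f\in F$, coefficients $\lambda_j$ with $\langle y'_i,S(f-f_0)\rangle=\sum_j\lambda_j\langle y'_i,S(f_j-f_0)\rangle$, and maximality bounds each $|\lambda_j|$. Lifting the $y'_i$ to $X'$ through $S'$ gives an affine algorithm of rank $m$:
\[
 A f=Sf_0+\sum_j\lambda_j(f)\,S(f_j-f_0).
\]
For the error I estimate $\|S(f-f_0)-\sum\lambda_jS(f_j-f_0)\|$ by a determinant argument in the style of Pietsch. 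The residual must be small, since otherwise adding $f$ as a vertex would enlarge a determinant of size $m+1$. Taking $m=2n-1$ and using the concavity estimate $D_{2n-1}^{2n-1}\le D_n^{n}\cdot D_{n}^{n-1}$-type chains, the loss should be a factor of order $n$. The Hilbert case uses Gram determinants instead of $\ell_\infty$-type estimates and gains $\sqrt n$.

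\textbf{Step 2 (lower bound).} This is the new lower bound and the main obstacle. Take the near-extremal simplex $\Delta=\conv\{f_0,\dots,f_n\}\subset F$ and the uniform probability measure $\mu$ on $\Delta$, transported to the $n$-dimensional affine space. By Bakhvalov's principle it suffices to bound the $\mu$-average error of deterministic adaptive algorithms; the measurability hypothesis is exactly what makes Fubini work here. For the deterministic case the worst case dominates the average, so no measurability is needed.

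Each algorithm uses $n$ functionals, and the fibres of its information map on $\Delta$ are, adaptively, affine sections of $\Delta$. The first key point is that a convex set $K$ of dimension $d$ of the form "simplex section" always contains points $g,h$ with $\|S(g-h)\|$ bounded below by volume ratios. The second is that an average over fibres of the distance to their centroid controls the error, since $\|Sg-\varphi\|+\|Sh-\varphi\|\ge\|S(g-h)\|$. Relating the conditional widths back to $D_n$ uses the determinant as the volume of $S\Delta$ measured by $y'_1,\dots,y'_n$, together with Grünbaum-type centroid inequalities for simplices. I expect the constants $e$ and $(n+1)/n$ there to produce the $27$.

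Finally, combining the two inequalities gives \eqref{eq:main2} and \eqref{eq:main2-hilbert}. The bound $\enon_{2n-1}\le a_{2n-1}$ is trivial.
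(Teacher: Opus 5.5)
\textbf{The lower bound in Step~2 fails.} The inequality $\min\{\eran_n,\eada_n\}\ge c_2D_n$ that your argument aims at is false for every $c_2>0$. Take $X=Y=\ell_2^n$, $S=\mathrm{id}$ and $F$ the unit ball. With $f_0=0$, $f_j=e_j$, $y_i'=e_i$ the determinant equals $1$, so $D_n\ge1$. But measuring the $n$ coordinates is exact, so $\eran_n=\eada_n=0$.

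Your construction shows the same defect from the inside. $\Delta=\conv\{f_0,\ldots,f_n\}$ is only $n$-dimensional, so $n$ functionals separating its affine hull identify every point of $\Delta$. All fibres of the information map are then single points, and the $\mu$-average error of such an algorithm is zero. A determinant of $n$ vectors measures the \emph{largest} part of the spectrum, which is exactly what $n$ measurements can resolve. The upper bound is not the problem: in your $\ell_\infty$-type normalization the Schur/Pietsch step is lossless, and telescoping it over $k=1,\ldots,n$ even gives $a_{2n-1}\le a_{n-1}\le D_n$. It simply cannot be matched by any lower bound for $n$ measurements.

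\textbf{The missing idea is dimension doubling together with a ratio of determinants.}
\begin{itemize}
\item The paper takes a near-extremal simplex with $2n+1$ vertices for $D_{2n}$. In the $2n$-dimensional coefficient space, the fibres of any $n$ adaptive measurements are then $n$-dimensional affine sections.
\item On each fibre the error is controlled by the Jacobian of $T=BSV$ restricted to an $n$-dimensional subspace. By min-max this is at least the product of the $n$ \emph{smallest} singular values of $T$.
\item The geometric mean $G_n(T)$ of these is bounded below, via Cauchy--Binet, by $(1-\varepsilon)^{1/n}Q_n^{-1}(D_{2n}/D_n)^{1/n}$.
\item The matching upper bound is $a_{2n-1}\le2\sqrt n\,(D_{2n}/D_n)^{1/n}$. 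It comes from telescoping the one-step bound only over $k=n+1,\ldots,2n$, which is where the oversampling $2n-1$ versus $n$ enters.
\end{itemize}

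\textbf{Normalization.} For the Euclidean volume argument to control errors measured in $Y$, the paper uses a joint $\ell_2$ constraint, namely a contraction $B\colon Y\to\ell_2^k$, rather than the Grothendieck one. This gives $\norm{Tz-\widetilde A(z)}_2\le\norm{Sf-A(f)}_Y$. The price is the factor $2\sqrt n$ in the upper bound, which is exactly why the result is $27n$ in general and $\frac{27}{2}\sqrt n$ for Hilbert $Y$. With your $\ell_\infty$ normalization you would have to measure errors in $\ell_\infty^{2n}$ and use cube-slicing estimates instead.

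\textbf{How the lower bound is actually obtained.} The basic estimate is $\E\norm{Z-c}_2\ge\frac n{n+1}(v/\omega_n)^{1/n}$ for $Z$ uniform on an $n$-dimensional set of volume $v$ and any point $c$.
\begin{itemize}
\item In the randomized case this is combined with a Gibbs-inequality bound on $\int p^{1+1/n}$ for the density $p$ of the answers.
\item In the deterministic case it is combined with a central section and Brunn--Minkowski.
\end{itemize}
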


The deterministic part holds without any measurability, and the randomized part also for the
larger root mean square error. 
Both 
inequalities are sharp up to logarithmic factors, even for symmetric
$F$: for the identities $\ell_1^m\to\ell_\infty^m$ and
$\ell_1^m\to\ell_2^m$ with suitable $m=m(n)$ and $F$ the unit ball of $\ell_1^m$, the ratio
$a_{2n-1}/\eran_n$ is at least $cn/\log(n+1)$, respectively
$c\sqrt{n/\log(n+1)}$, with a constant $c$, see~\cite{KNW2024,KW2026}.  
The papers also give examples of a single problem where the order of convergence differs 
by $1$ and $1/2$, respectively. 
The optimal logarithmic factors in these comparisons, and whether the oversampling is needed, are open.

We add that our result is uniform over arbitrary convex input sets and
does not replace stronger lower bounds available for special 
spaces.  
In particular, if $X$ and~$Y$ are Hilbert spaces, and $F$ is the  unit ball of $X$,  
then the constant $27n$ can be replaced by 2, see~\cite{KNU2025,Novak1992}. 
Moreover, Bernstein-number estimates~\cite{Kunsch2017} and
Gaussian lower bounds~\cite{Heinrich1992}  due to Kunsch and
Heinrich
can yield sharper
problem-dependent results. 

\begin{remark}[Symmetric $F$] 
For symmetric classes $F$ and deterministic algorithms, it is a classical result 
of \cite{GM1980,TW80} that adaption can gain at most a constant factor, 
see also~\cite{Bakhvalov1971,CW2004}.  
Whether the same holds for randomized algorithms has only been settled recently.
The answer is no:
examples where adaptive randomized algorithms are much better than
nonadaptive randomized algorithms are given by Heinrich \cite{Heinrich2024a,He24b,Heinrich2024b} 
in the setting where only function evaluations are admissible measurements;
Kunsch, Novak and Wnuk \cite{KNW2024,KW2026} give examples in our setting of measurements from $X'$. 
\end{remark}

The proof of Theorem~\ref{thm:main} uses a geometric quantity: for $k\ge1$, let $D_k$ be the
supremum of $|\det[BS(f_1-f_0),\ldots,BS(f_k-f_0)]|$ over
$f_0,\ldots,f_k\in F$ and linear maps $B\colon Y\to\ell_2^k$ with
$\norm B\le1$. 
That is, we look for a $2n$-dimensional simplex in $F$ and a contraction such that the
	contracted image simplex has maximal volume. 
If $D_{2n}=0$, an affine
algorithm with $2n-1$ measurements is exact. 
Otherwise, we use the following steps: 
\begin{itemize}
\item We first show that $a_{2n-1}\le2\sqrt n\,(D_{2n}/D_n)^{1/n}$, without the factor
 $2\sqrt n$ for Hilbert spaces $Y$, see Section~\ref{sec:det}.
\item Then, for every $\varepsilon>0$, we fix a simplex $\Delta\subset F$ with $2n+1$
 vertices and a contraction $B$ with determinant at least
 $(1-\varepsilon)D_{2n}$, and let $\mu_\Delta$ be the uniform distribution on
 $\Delta$. We show in Section~\ref{sec:uniform} that, for every measurable deterministic~$A$ using $n$
 measurements,
 \begin{equation}\label{eq:outline-lower}
  \int_\Delta\norm{Sf-A(f)}_Y\,d\mu_\Delta(f)
  \ \ge\ \frac{2(1-\varepsilon)^{1/n}}{27\sqrt n}
     \left(\frac{D_{2n}}{D_n}\right)^{1/n}.
 \end{equation}
We use that, for any $y \in \R^n$, the set of inputs $f\in X$ that result in the measurement $y$ is an 
affine subspace $E_y$. The output $A(f)$ is constant on $E_y$ and the conditional 
distribution of $\mu_\Delta$ is uniform on the section $\Delta \cap E_y$. 
Proving \eqref{eq:outline-lower} therefore reduces to bounding from below the average distance 
of a uniformly distributed element from $S(\Delta \cap E_y)$ to a fixed point from $Y$ 
and then averaging over $y$, which has density proportional to the volume of $\Delta \cap E_y$.

\item For measurable randomized $(A_\omega)$,
 apply~\eqref{eq:outline-lower} to each $A_\omega$ and use Fubini:
 $\sup_f\E\norm{Sf-A_\omega(f)}_Y
 \ge\E\int_\Delta\norm{Sf-A_\omega(f)}_Y\,d\mu_\Delta(f)$.
 This is Bakhvalov's argument~\cite{Bakhvalov1959}; one input measure
 serves for all algorithms.
\item For deterministic $A$ without measurability, we consider only a central section, and prove that~\eqref{eq:outline-lower} holds for the
 worst case instead of the average.
\end{itemize}
We combine these ingredients, and prove Theorem~\ref{thm:main}, in Section~\ref{sec:proof}.
Remark~\ref{rem:sixteen} 
discusses a possible improvement of the constant 27 to 16 for randomized algorithms;
this requires a more complicated measure on a different simplex,
with Gaussian weights on the vertices in place of the uniform
distribution.

\bigskip

\section{Affine determinants and the choice of the simplex}
\label{sec:det}

For $k\ge1$, define
\begin{equation}\label{eq:D}
 D_k=D_k(S,F)=
 \sup_{\substack{f_0,\ldots,f_k\in F\\
                 B\in\mathcal L(Y,\ell_2^k),\ \norm B\le1}}
 \bigl|\det[BS(f_1-f_0),\ldots,BS(f_k-f_0)]\bigr|.
\end{equation}
Thus $B$ is a linear contraction defined on all of $Y$. The determinant
is $k!$ times the Euclidean volume of the image simplex. Hadamard's
inequality shows that $D_k<\infty$. Moreover, $D_k>0$ precisely when the dimension of 
the affine span 
$\aff S(F)$ is at least $k$. 

\begin{remark}[Relation to other quantities]\label{rem:Grothendieck}
Writing $By=(\ell_1(y),\ldots,\ell_k(y))$, the determinant
in~\eqref{eq:D} has entries $\ell_i(S(f_j-f_0))$, with the joint
constraint $\sum_i|\ell_i(y)|^2\le\norm y_Y^2$. 
Replacing this normalization by $\max_i|\ell_i(y)|\le\norm y_Y$, 
we obtain the Grothendieck determinant of $S$, see~\cite{PT89}, 
which has already been used in the context of sampling, 
see~\cite[p.~127]{Novak1995} or \cite[Section~5.3]{KU2026}. 
Also note that a quantity like $D_k$, but with ``$\ell_2$-normalization on both sides'', 
has recently been introduced in~\cite{Ullrich2026} to compare approximation and Hilbert numbers.
\end{remark}

\medskip

If $D_{2n}=0$, then the 
space  $\aff S(F)$ has dimension at
most $2n-1$. A bounded projection $P$ onto the respective linear subspace 
gives the exact
affine algorithm $f\mapsto Sf_0+P(Sf-Sf_0)$. 
The inequalities \eqref{eq:main2} and \eqref{eq:main2-hilbert} from 
Theorem~\ref{thm:main} are then immediate. We therefore assume from now on that $D_{2n}>0$.

\medskip

We begin with an upper bound on $a_n$ in terms of quotients of $D_k$, 
which is reminiscent of~\cite[Lemma~4]{Ullrich2026}.

\begin{lemma}\label{lem:interpolation}
If $D_{2n}>0$, then
\begin{equation}\label{eq:ratio}
 a_{2n-1}(S,F)\le2\sqrt n\left(\frac{D_{2n}}{D_n}\right)^{1/n}.
\end{equation}
For a Hilbert target, the factor $2\sqrt n$ may be omitted.
\end{lemma}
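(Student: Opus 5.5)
The plan is to prove a one-step bound for each intermediate dimension and then multiply. For $k=n+1,\ldots,2n$ I will show
\[
 a_{k-1}(S,F)\le c_k\,\frac{D_k}{D_{k-1}},\qquad c_k=\sqrt{k}\Bigl(\tfrac{k}{k-1}\Bigr)^{(k-1)/2},
\]
with $c_k=1$ for a Hilbert target. Since $a_{2n-1}\le a_{k-1}$ for all $k\le 2n$, taking the product over $k=n+1,\ldots,2n$ gives
\[
 a_{2n-1}^n\le\prod_{k=n+1}^{2n} c_k\cdot\frac{D_{2n}}{D_n}.
\]
Here $D_n>0$ because $D_{2n}>0$. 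It then remains to check that $\bigl(\prod_{k=n+1}^{2n}c_k\bigr)^{1/n}\le 2\sqrt n$. This follows from $(k/(k-1))^{k-1}\le e$ together with a Stirling-type estimate $\bigl((2n)!/n!\bigr)^{1/n}\le 4n/e$; the two factors of $e$ cancel. I expect this bookkeeping, getting exactly $2\sqrt n$ rather than something like $\sqrt{2en}$, to be the only delicate point. If the Stirling estimate is slightly off for small $n$, I will check those cases directly.

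The one-step bound is proved as follows. Fix $\delta>0$ and choose $f_0,\ldots,f_{k-1}\in F$ and a contraction $B\colon Y\to\ell_2^{k-1}$ whose matrix
\[
 M=[BS(f_1-f_0),\ldots,BS(f_{k-1}-f_0)]
\]
satisfies $|\det M|\ge(1-\delta)D_{k-1}>0$. Define the affine algorithm of rank $k-1$
\[
 A(f)=Sf_0+\sum_{i=1}^{k-1}\lambda_i(f)\,S(f_i-f_0),\qquad \lambda(f)=M^{-1}BS(f-f_0).
\]
Each $\lambda_i$ is a continuous affine functional on $X$. The residual $y=S(f-f_0)-\sum_i\lambda_i(f)S(f_i-f_0)$ satisfies $By=0$. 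For $f\in F$, take a norming functional $\ell\in Y'$ with $\norm{\ell}=1$ and $\ell(y)=\norm y$. For $\alpha^2+\beta^2=1$, set $B'=(\alpha B,\beta\ell)\colon Y\to\ell_2^k$; this is a contraction because $\norm{B'z}^2\le\alpha^2\norm{z}^2+\beta^2\norm{z}^2$.

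Now apply $B'$ to the simplex $f_0,\ldots,f_{k-1},f$. After subtracting the column combination $\sum_i\lambda_i(f)$ times the first $k-1$ columns, the last column becomes $B'y=(0,\beta\norm y)$. The determinant is then block triangular, and it equals $\alpha^{k-1}\beta\det M\cdot\norm y$ up to sign. Hence
\[
 D_k\ge\alpha^{k-1}\beta(1-\delta)D_{k-1}\norm{Sf-A(f)}_Y.
\]
Choosing $\beta^2=1/k$ maximizes $\alpha^{k-1}\beta$ and gives exactly $1/c_k$. Letting $\delta\to0$ proves the one-step bound.

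For a Hilbert target the construction is simpler. Replace $A$ by $Sf_0+P_V S(f-f_0)$, where $P_V$ is the orthogonal projection onto $V=\operatorname{span}\{S(f_i-f_0)\}$; this is affine of rank at most $k-1$. The residual is orthogonal to $V$. One may take $B$ to be an isometry from $V$ onto $\ell_2^{k-1}$ that vanishes on $V^\perp$, since replacing a contraction $B$ by $B P_V$ and then polar-decomposing only increases the determinant. Then $B'=(B,\langle\cdot,y/\norm y\rangle)$ is a contraction with $\alpha=\beta=1$. So $c_k=1$, and the product argument gives \eqref{eq:ratio} without the factor $2\sqrt n$.
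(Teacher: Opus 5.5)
Your one-step bound is the paper's argument. The interpolant $Sf_0+SUM^{-1}BS(f-f_0)$, the contraction $(\alpha B,\beta\ell)$ and the choice $\beta^2=1/k$ are the same, and your column reduction is the paper's Schur-complement computation. Your constant is $c_k=\theta_k$, and the telescoping of the $D_k$ is correct. Your Hilbert variant is also correct: replace $B$ by an isometry on $V$ that vanishes on $V^\perp$, then append $\langle\cdot,y/\norm{y}\rangle$. The paper instead rewrites $D_k$ as a supremum of Gram volumes and uses base times height, which amounts to the same thing.

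The step that fails is the bookkeeping you flagged. The estimate $\bigl((2n)!/n!\bigr)^{1/n}\le 4n/e$ is false for every $n\ge1$, not just for small $n$. Stirling's bounds give
\[
 \frac{(2n)!}{n!}\ \ge\ \sqrt2\,e^{-1/(12n)}\Bigl(\frac{4n}{e}\Bigr)^{n}\ >\ \Bigl(\frac{4n}{e}\Bigr)^{n},
\]
and already for $n=1$ you have $2>4/e$. So checking small cases cannot repair it. No sharper Stirling estimate helps either. Since $(k/(k-1))^{k-1}<e$ strictly, the bound $e^{n/2}\sqrt{(2n)!/n!}$ strictly exceeds $\prod_k c_k$. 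That product is exactly $(2\sqrt n)^n$, so this route always overshoots $2\sqrt n$.

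The missing observation is that $c_k=\sqrt k\,(k/(k-1))^{(k-1)/2}=k^{k/2}/(k-1)^{(k-1)/2}$. The product therefore telescopes:
\[
 \prod_{k=n+1}^{2n}c_k=\frac{(2n)^{n}}{n^{n/2}}=(2\sqrt n)^n .
\]
The claimed bound then follows with no estimate at all. With this replacement your proof is complete.
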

\begin{proof}
We first show, for $k\ge2$ and $D_{k-1}>0$, that
	\begin{equation}\label{eq:one-step}
		a_{k-1}D_{k-1}\le \theta_k D_k,\qquad
		\theta_k=\frac{k^{k/2}}{(k-1)^{(k-1)/2}}.
	\end{equation}
Choose vertices $f_0,\ldots,f_{k-1}$ and a contraction
$B:Y\to\ell_2^{k-1}$ such that, with $U : \ell_2^{k-1} \to X$,
$Uu=\sum_{j=1}^{k-1}u_j(f_j-f_0)$ and $M=BSU$,
$|\det M|\ge(1-\eta)D_{k-1}>0$. The affine interpolant and residual are
\[
I(f)=Sf_0+SUM^{-1}BS(f-f_0),\qquad r(f)=Sf-I(f).
\]
The linear part of $I$ has rank at most $k-1$, and $Br(f)=0$.
For $r(f)\ne0$, choose $\ell\in Y'$ with $\norm\ell=1$ and
$\ell(r(f))=\norm{r(f)}_Y$. The map
$\widetilde B=(\alpha B,\beta\ell)$ is a contraction whenever
$\alpha,\beta\ge0$ and $\alpha^2+\beta^2=1$.
For the vertices $f_0,\ldots,f_{k-1},f$ and the contraction $\widetilde B$,
	the matrix in~\eqref{eq:D} has the block form
	\[
	\begin{pmatrix}\alpha M & \alpha b\\ \beta c^{t} & \beta d\end{pmatrix},
	\qquad b=BS(f-f_0),\quad c^{t}u=\ell(SUu),\quad d=\ell(S(f-f_0)).
	\]
	By Schur's formula, its determinant is
	$\alpha^{k-1}\det M\cdot\beta\,(d-c^{t}M^{-1}b)$, and
	$d-c^{t}M^{-1}b=\ell\bigl(S(f-f_0)-SUM^{-1}BS(f-f_0)\bigr)
	=\ell(r(f))=\norm{r(f)}_Y$. Hence
	\[
	\alpha^{k-1}\beta|\det M|\norm{r(f)}_Y\le D_k.
	\]
Taking $\alpha^2=(k-1)/k$ and $\beta^2=1/k$, then the supremum over
$f$ and the limit $\eta\downarrow0$, proves~\eqref{eq:one-step}.
Now multiply \eqref{eq:one-step} for $k=n+1,\ldots,2n$ and use
$a_{2n-1}\le a_{k-1}$. The factors telescope:
\[
a_{2n-1}^n\le\left(\prod_{k=n+1}^{2n} \theta_k\right)\frac{D_{2n}}{D_n},
\qquad
\prod_{k=n+1}^{2n} \theta_k=\frac{(2n)^n}{n^{n/2}}=(2\sqrt n)^n.
\]
Taking $n$th roots proves the assertion.

\smallskip
\noindent\emph{Hilbert targets.} Let $Y$ be a Hilbert space. For
	$y_1,\ldots,y_k\in Y$ let
	\[
	V_k(y_1,\ldots,y_k)=\Big(\det\bigl[\langle y_i,y_j\rangle\bigr]_{i,j=1}^k\Big)^{1/2}
	\]
	be the $k$-dimensional volume of the parallelotope spanned by
	$y_1,\ldots,y_k$. We first show
	\begin{equation}\label{eq:D-volume}
		\sup_{B\in\mathcal L(Y,\ell_2^k),\ \norm B\le1}
		\bigl|\det[By_1,\ldots,By_k]\bigr| = V_k(y_1,\ldots,y_k).
	\end{equation}
	We may assume that the $y_j$ are linearly independent; otherwise both
	sides vanish. Let $E=\spanop\{y_1,\ldots,y_k\}$ and fix an orthonormal
	basis of $E$. If $c\in\R^{k\times k}$ is the coefficient matrix of
	$y_1,\ldots,y_k$ in this basis, then $|\det c|=V_k(y_1,\ldots,y_k)$.
	For a contraction $B$, the restriction $B|_E$ is a linear map
	between $k$-dimensional Euclidean spaces with all singular values at
	most one. Hence $|\det[By_1,\ldots,By_k]|=|\det B|_E|\,|\det c|\le V_k$.
	Conversely, equality holds for $B=JP_E$, where $P_E$ is the orthogonal
	projection onto $E$ and $J\colon E\to\ell_2^k$ is an isometry.
	By~\eqref{eq:D-volume},
	\[
	D_k=\sup_{f_0,\ldots,f_k\in F}V_k\bigl(S(f_1-f_0),\ldots,S(f_k-f_0)\bigr).
	\]
	
	Volumes satisfy the base-times-height formula: for all
	$y_1,\ldots,y_{k-1},y\in Y$,
	\begin{equation}\label{eq:base-height}
		V_k(y_1,\ldots,y_{k-1},y)
		=V_{k-1}(y_1,\ldots,y_{k-1})\,
		\operatorname{dist}\bigl(y,\spanop\{y_1,\ldots,y_{k-1}\}\bigr),
	\end{equation}
	as one sees by Gram--Schmidt orthogonalization of $y$ against the other
	vectors.
	
	Now let $0<\eta<1$ and choose $f_0,\ldots,f_{k-1}\in F$ with
	\[
	V_{k-1}\bigl(S(f_1-f_0),\ldots,S(f_{k-1}-f_0)\bigr)\ge(1-\eta)D_{k-1}>0.
	\]
	Let $P$ be the orthogonal projection onto the $(k-1)$-dimensional space
	$E'=\spanop\{S(f_j-f_0):1\le j\le k-1\}$, and put
	\[
	I(f)=Sf_0+PS(f-f_0).
	\]
	This is an affine algorithm whose linear part $PS$ has rank at most $k-1$ and
	$\norm{Sf-I(f)}_Y=\operatorname{dist}(S(f-f_0),E')$.
	For $f\in F$, the points $f_0,\ldots,f_{k-1},f$ are admissible
	in the supremum defining $D_k$. Hence~\eqref{eq:base-height} with
	$y=S(f-f_0)$ gives
	\[
	(1-\eta)D_{k-1}\,\norm{Sf-I(f)}_Y
	\le V_k\bigl(S(f_1-f_0),\ldots,S(f_{k-1}-f_0),S(f-f_0)\bigr)\le D_k.
	\]
	Taking the supremum over $f\in F$ and letting $\eta\downarrow0$ yields
	$a_{k-1}D_{k-1}\le D_k$, which is~\eqref{eq:one-step} with $\theta_k=1$
	and yields \eqref{eq:ratio} without $2\sqrt{n}$.
\end{proof}

\medskip

We now show that the quotients of the $D_k$ can be bounded by another quantity $G_n(T)$ that will also 
appear as a lower bound for the errors in Section~\ref{sec:uniform}. 
This is based on a \emph{large} simplex in $F$, measured by means of the corresponding determinants.

For this, fix $0<\varepsilon<1$ and choose 
vertices $f_0,\ldots,f_{2n}\in F$
and a contraction $B:Y\to\ell_2^{2n}$ such that
\begin{equation}\label{eq:choice}
 \delta:=\bigl|\det[BS(f_1-f_0),\ldots,BS(f_{2n}-f_0)]\bigr|
          \ge(1-\varepsilon)D_{2n}>0.
\end{equation}
Fix this choice and put
\[
 \Delta=\conv\{f_0,\ldots,f_{2n}\},\qquad
 N=2n+1,\qquad \bar f=\frac1N\sum_{j=0}^{2n}f_j.
\]
The vertices are affinely independent, and $\Delta\subset F$.
The simplex is chosen before any algorithm or random seed. 

For the analysis, set
\begin{equation}\label{eq:coordinates}
 H_0=\{z\in\R^N:\textstyle\sum_{j=0}^{2n}z_j=0\},\qquad
 Vz=\sum_{j=0}^{2n}z_jf_j,\qquad T=BSV.
\end{equation}
Equip $H_0$ and $\R^{2n}$ with their Euclidean inner products 
and consider $T$ as a mapping from $H_0$ to $\R^{2n}$. Write
$\sigma_1(T)\ge\cdots\ge\sigma_{2n}(T)>0$ for the singular values, and put
\begin{equation}\label{eq:G}
 G_n(T)=\left(\prod_{j=n+1}^{2n}\sigma_j(T)\right)^{1/n},
 \qquad Q_n=\binom{2n+1}{n+1}^{1/(2n)}.
\end{equation}

\begin{lemma}\label{lem:singular}
For the choice~\eqref{eq:choice},
\begin{equation}\label{eq:a-G}
 G_n(T)\ge\frac{(1-\varepsilon)^{1/n}}{Q_n}
                    \left(\frac{D_{2n}}{D_n}\right)^{1/n}.
\end{equation}
\end{lemma}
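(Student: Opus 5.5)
The plan is to compare the full product of singular values of $T$ with the product of the $n$ largest ones. The bottom half is then the quotient
\[
\prod_{j=n+1}^{2n}\sigma_j(T)=\frac{\prod_{j=1}^{2n}\sigma_j(T)}{\prod_{j=1}^{n}\sigma_j(T)}.
\]
I would bound the numerator from below by $\delta$ and the denominator from above by $D_n$, up to explicit combinatorial factors.

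\emph{Numerator.} Since $\sum_j z_j=0$ on $H_0$, we can write $Tz=\sum_{j=1}^{2n}z_j\,BS(f_j-f_0)$. Hence $T=M\pi$, where:
\begin{itemize}
\item $M=[BS(f_1-f_0),\ldots,BS(f_{2n}-f_0)]$, so $|\det M|=\delta$;
\item $\pi\colon H_0\to\R^{2n}$ is the coordinate map $z\mapsto(z_1,\ldots,z_{2n})$.
\end{itemize}
The inverse of $\pi$ sends $e_j$ to $e_j-e_0$. The Gram matrix of these vectors is $I+\one\one^t$, whose determinant is $N$. So $|\det\pi|=N^{-1/2}$ with respect to orthonormal bases, and
\[
\prod_{j=1}^{2n}\sigma_j(T)=|\det T|=\frac{\delta}{\sqrt N}.
\]

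\emph{Denominator.} I use the variational characterization
\[
\prod_{j\le n}\sigma_j(T)=\sup\bigl|\det[CTw_1,\ldots,CTw_n]\bigr|,
\]
where the supremum runs over orthonormal $w_1,\ldots,w_n\in H_0$ and contractions $C\colon\R^{2n}\to\ell_2^n$ (take the top singular vectors). Put $A=CBSV\in\R^{n\times N}$, whose columns are $a_j=CBSf_j$, and let $W=[w_1,\ldots,w_n]\in\R^{N\times n}$. To exploit $W\subset H_0$, I augment both matrices:
\[
\widetilde A=\begin{pmatrix}\one^t\\ A\end{pmatrix},\qquad
\widetilde W=\bigl[N^{-1/2}\one,\,W\bigr].
\]
Then $\widetilde W$ has orthonormal columns, and
\[
\widetilde A\widetilde W=\begin{pmatrix}\sqrt N&0\\ \ast&AW\end{pmatrix},
\]
so $\det(\widetilde A\widetilde W)=\sqrt N\det(AW)$.

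Apply Cauchy--Binet over $(n+1)$-subsets $J\subset\{0,\ldots,2n\}$:
\[
\sqrt N\,|\det(AW)|\le\sum_{J}|\det\widetilde A_J|\,|\det\widetilde W_J|.
\]
For each $J$, subtract the column of the smallest index $j_0\in J$ from the others. This shows that $|\det\widetilde A_J|$ equals $|\det[CBS(f_j-f_{j_0})]_{j\in J\setminus\{j_0\}}|$. Since $CB\colon Y\to\ell_2^n$ is a contraction and the $f_j$ lie in $F$, each such term is at most $D_n$. For the other factor, Cauchy--Schwarz together with Cauchy--Binet for $\widetilde W^t\widetilde W=I$ gives
\[
\sum_J|\det\widetilde W_J|\le\binom{N}{n+1}^{1/2}.
\]
Hence
\[
\prod_{j\le n}\sigma_j(T)\le\frac{D_n\binom{2n+1}{n+1}^{1/2}}{\sqrt N}=\frac{D_nQ_n^n}{\sqrt N}.
\]

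\emph{Conclusion.} Dividing the two bounds gives
\[
G_n(T)^n\ge\frac{\delta}{D_nQ_n^n}\ge\frac{(1-\varepsilon)D_{2n}}{D_nQ_n^n}.
\]
Taking $n$th roots yields \eqref{eq:a-G}.

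The main obstacle is the denominator bound: relating orthonormal directions in $H_0$ to differences of vertices. The augmentation by the normalized all-ones row and column is exactly what converts Cauchy--Binet minors into $n$-simplex determinants and produces the factor $\binom{2n+1}{n+1}^{1/2}$.
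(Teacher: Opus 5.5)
Your proof is correct and follows the paper's argument. You use the same factorization through the edge vectors $e_j-e_0$, with Gram determinant $N$, to get $\prod_{j\le 2n}\sigma_j(T)=\delta/\sqrt N$, and the same augmentation by the all-ones row plus Cauchy--Binet to get $\prod_{j\le n}\sigma_j(T)\le\binom{N}{n+1}^{1/2}D_n/\sqrt N$. The only cosmetic difference is that the paper applies Cauchy--Binet directly to the Gram matrix $RR^*$, so no Cauchy--Schwarz step is needed, whereas you apply it to $\widetilde A\widetilde W$ and then use Cauchy--Schwarz with $\widetilde W^t\widetilde W=I$; both yield the same constant.
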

\begin{proof}
Fix a linear isometry $U_0\colon\R^{2n}\to H_0$, i.e.\ an orthonormal basis
	of $H_0$, and let $W\colon\R^{2n}\to H_0$ map the $j$th unit vector to
	$e_j-e_0$. Since $BS(f_j-f_0)=T(e_j-e_0)$, the matrix in~\eqref{eq:choice}
	is $TW=(TU_0)(U_0^*W)$. 
	The singular values of $TU_0$ are those of $T$, so
	$|\det(TU_0)|=\prod_{j=1}^{2n}\sigma_j(T)$. Moreover,
	$\det(U_0^*W)^2=\det(W^*W)$ is the Gram determinant of the vectors
	$e_j-e_0$, $1\le j\le2n$. Their Gram matrix is $I+\one\one^t$, whose
	determinant is $1+2n=N$. Consequently,
	\begin{equation}\label{eq:full-product}
		\delta=\sqrt N\prod_{j=1}^{2n}\sigma_j(T).
	\end{equation}
	Let $Q$ be the matrix with columns $BS(f_j-\bar f)$, $0\le j\le2n$, and
	let $\Pi$ be the orthogonal projection of $\R^N$ onto $H_0$. 
	Since $f_j-\bar f=V\Pi e_j$, we have $Q=T\Pi$. In particular $Q\one=0$, and
	$QQ^*=T\Pi\Pi^*T^*=TT^*$, because $\Pi^*\colon H_0\to\R^N$ is the
	inclusion and hence $\Pi\Pi^*$ is the identity on $H_0$.
	Let $P$ have the first $n$ left singular vectors of $T$ as its rows, and
	let $R$ be the $(n+1)\times N$ matrix with rows $\one^t$ and $PQ$. As
	$Q\one=0$ and $PQQ^*P^*=PTT^*P^*=\mathrm{diag}(\sigma_1(T)^2,\ldots,\sigma_n(T)^2)$,
	the Gram matrix $RR^*$ is block diagonal and
	$\det(RR^*)=N\prod_{j=1}^n\sigma_j(T)^2$.
	By Cauchy--Binet, $\det(RR^*)$ is the sum of the squares of the
	$\binom N{n+1}$ minors $\det R_J$, where $J=\{j_0,\ldots,j_n\}$ selects
	$n+1$ columns. In $R_J$, subtract the column $j_0$ from the others.
	The first row becomes $(1,0,\ldots,0)$, and $\bar f$ cancels, so
	\[
	\det R_J=\det\bigl[PBS(f_{j_i}-f_{j_0})\bigr]_{i=1}^n .
	\]
	Since $P$ has orthonormal rows, $PB\colon Y\to\ell_2^n$ is a contraction,
	and hence $|\det R_J|\le D_n$. Therefore
	\begin{equation}\label{eq:top-product}
		N\prod_{j=1}^n\sigma_j(T)^2\le\binom N{n+1}D_n^2.
	\end{equation}
	Dividing~\eqref{eq:full-product} by the square root of~\eqref{eq:top-product}
	gives $\delta\le\binom N{n+1}^{1/2}D_n\prod_{j=n+1}^{2n}\sigma_j(T)
	=Q_n^nD_nG_n(T)^n$. Now use~\eqref{eq:choice}.
\end{proof}

\section{A lower bound for uniform input}  \label{sec:uniform}

In this section, we work in finite-dimensional Euclidean spaces, with approximation errors measured 
in the Euclidean norm. We establish an average-case lower bound for uniformly distributed 
inputs on a convex body 
(a compact convex set with nonempty interior), together with a corresponding worst-case bound for 
arbitrary deterministic algorithms. 
These estimates will later be applied in the coefficient space of the simplex selected in Section~\ref{sec:det}.

\begin{lemma}\label{lem:density}
If $p$ is a bounded probability density with bounded support in $\R^n$ and
$\int\norm y_2^2p(y)\,dy\le r^2$, then
\begin{equation}\label{eq:density}
 \int_{\R^n}p(y)^{1+1/n}\,dy\ge\sqrt{\frac n{2\pi e r^2}}.
\end{equation}
\end{lemma}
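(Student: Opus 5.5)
We reduce Lemma~\ref{lem:density} to the Gaussian maximum-entropy bound combined with Jensen's inequality. Let $h(p)=-\int p\log p$ be the differential entropy, which is finite because $p$ is bounded with bounded support. Among densities with second moment $\int\norm y_2^2p\le r^2$, the entropy is maximized by the centered Gaussian with covariance $(r^2/n)I$. Hence
\[
 h(p)\le\frac n2\log\Bigl(\frac{2\pi e r^2}{n}\Bigr).
\]
To prove this, compare $p$ with that Gaussian $g$ via $0\le\int p\log(p/g)$ and note that $-\int p\log g=\frac n2\log(2\pi r^2/n)+\frac{n}{2r^2}\int\norm y^2p\le\frac n2\log(2\pi e r^2/n)$.

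Next we pass from entropy to the $L^{1+1/n}$-type integral. Write $\int p^{1+1/n}=\int p\cdot p^{1/n}\,dy$, an expectation of $p^{1/n}$ under the probability measure $p\,dy$. Jensen's inequality for the convex function $\exp$ gives
\[
 \int p\,e^{\frac1n\log p}\,dy\ge\exp\Bigl(\frac1n\int p\log p\Bigr)=e^{-h(p)/n}.
\]
Here we restrict to the set $\{p>0\}$, where everything is well defined by boundedness. Combining the two displays yields $\int p^{1+1/n}\ge(2\pi e r^2/n)^{-1/2}=\sqrt{n/(2\pi e r^2)}$, which is \eqref{eq:density}.

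The only potential obstacle is technical: we must justify finiteness and integrability in the entropy comparison. Boundedness of $p$ gives $\int p\log p<\infty$ from above, and bounded support gives $h(p)<\infty$ and a finite second moment. Also $\int p\log(p/g)\ge0$ is Gibbs' inequality, valid here because $g>0$ everywhere. If $r=0$ the hypothesis is impossible for a density, so we may take $r>0$.
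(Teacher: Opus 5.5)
Your proposal is correct and follows the paper's proof: Gibbs' inequality against the centered Gaussian with covariance $(r^2/n)I$ yields $\int p\log p\ge-\frac n2\log\frac{2\pi e r^2}{n}$. Jensen's inequality for $\exp$ under $p\,dy$ then converts this into the bound on $\int p^{1+1/n}$. Your integrability remarks (boundedness of $p$, bounded support, $r>0$) correctly cover the finiteness the paper notes briefly.
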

\begin{proof}
Let $q(y)=(n/(2\pi r^2))^{n/2}\exp(-n\norm y_2^2/(2r^2))$.
Since $q>0$, the inequality $t\log t\ge t-1$ with $t=p(y)/q(y)$, multiplied
	by $q(y)$ and integrated over $y$, gives
	$\int p\log(p/q)\ge\int(p-q)=0$ (Gibbs' inequality).
Hence
\[
 \int p\log p\ge\int p\log q
 \ge-\frac n2\log\frac{2\pi e r^2}{n}.
\]
Jensen's inequality with respect to $p(y)\,dy$ now gives
\[
 \int p^{1+1/n}
 \ge\exp\left(\frac1n\int p\log p\right)
 \ge\sqrt{\frac n{2\pi e r^2}}.
\]
The integrals are finite under the stated boundedness assumptions,
with $0\log0=0$.
\end{proof}

The Gaussian density $q$ is only an auxiliary function in this estimate;
the input distribution below is uniform.
Put
\begin{equation}\label{eq:gamma}
 \omega_n=\vol_n\{y\in\R^n:\norm y_2\le1\}, \qquad	\gamma_n=\frac n{n+1}\omega_n^{-1/n}\sqrt{\frac n{2\pi e}}.
\end{equation}

\begin{proposition}\label{prop:uniform}
Let $K\subset\R^{2n}$ be a convex body
and $T:\R^{2n}\to\R^{2n}$ be linear. 
Every measurable adaptive algorithm $A$ using at most $n$ linear
measurements satisfies
\begin{equation}\label{eq:average-lower}
 \frac1M\int_K\norm{Tz-A(z)}_2\,dz
 \ge\gamma_n\frac{M^{1/n}}r G_n(T)
\end{equation}
with $G_n(T)$ as defined in \eqref{eq:G} and
	\begin{equation}\label{eq:M-r}
		M=\vol_{2n}(K),\qquad r^2=\frac1M\int_K\norm z_2^2\,dz.
	\end{equation}
If the center of mass of $K$ is zero, then every deterministic adaptive
algorithm, without measurability assumptions, satisfies
\begin{equation}\label{eq:det-lower}
 \sup_{z\in K}\norm{Tz-A(z)}_2
 \ge\gamma_n\frac{M^{1/n}}r G_n(T).
\end{equation}
\end{proposition}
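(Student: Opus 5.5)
The plan is to fibre $K$ along the level sets of the information map. On each fibre I use a volume bound for the image under $T$, and then I average over the information using Lemma~\ref{lem:density}. For the deterministic statement I use only the single central fibre.

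\emph{Step 1: adaptive orthonormal coordinates.} Adding dummy measurements only decreases the error, so we may assume exactly $n$ measurements. Replace the adaptive functionals $\psi_i(y_1,\dots,y_{i-1})$ by an adaptively Gram--Schmidt-orthonormalized system $\ell_1,\dots,\ell_n$, where $\ell_i$ depends on $u_1,\dots,u_{i-1}$ and $u_i=\langle\ell_i,z\rangle$. If a functional is linearly dependent on the previous ones, replace it by a fresh orthonormal direction. The new values $u$ determine the old values $y$, so $A(z)=\tilde\varphi(u)$ is constant on each fibre
\[
E_u=\{z:\langle\ell_i(u_1,\dots,u_{i-1}),z\rangle=u_i,\ i\le n\}.
\]
This fibre is an affine subspace of dimension $n$ whose direction space $W_u$ is the orthogonal complement of $\spanop\{\ell_i\}$. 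The map $z\mapsto(u,w)$, with $w$ the coordinates in $W_u$, is triangular with unit diagonal. Using the assumed joint measurability, Fubini then gives
\[
\int_K g\,dz=\int_{\R^n}\int_{K\cap E_u}g\,d\mathcal H^n\,du .
\]
Since $u$ consists of coordinates in an orthonormal frame, $\norm u_2\le\norm z_2$. Hence the density $p(u)=V_u/M$, with $V_u=\vol_n(K\cap E_u)$, is a bounded, compactly supported probability density with second moment at most $r^2$. I expect this step, namely the rigorous change of variables under adaption together with the measurability bookkeeping, to be the main technical obstacle. I would handle it by induction on $i$, conditioning on $u_1,\dots,u_{i-1}$.

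\emph{Step 2: the fibre estimate.} For any $n$-dimensional subspace $W\subset\R^{2n}$, the min-max principle gives $|\det T|_W|\ge\prod_{j=n+1}^{2n}\sigma_j(T)=G_n(T)^n$, where $\det T|_W$ is taken as a map onto $TW$. Hence $T(K\cap E_u)$ has $n$-volume at least $G_n^nV_u$, and the pushforward of the uniform distribution on the fibre has density at most $(G_n^nV_u)^{-1}$ on an $n$-flat. For a point $c$, the mean distance $\int\norm{x-c}_2\,dx$ of such a density is minimized by the uniform distribution on a ball of the same volume. This rearrangement gives
\[
\frac1{V_u}\int_{K\cap E_u}\norm{Tz-c}_2\ \ge\ \frac n{n+1}\,\omega_n^{-1/n}\,G_nV_u^{1/n}.
\]
The worst case over the fibre is even at least $\omega_n^{-1/n}G_nV_u^{1/n}$, since the image set is not contained in a ball of smaller volume.

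\emph{Step 3: averaging.} Multiply the fibre estimate by $V_u$ and integrate over $u$. This gives
\[
\frac1M\int_K\norm{Tz-A(z)}_2\,dz\ \ge\ \frac n{n+1}\,\omega_n^{-1/n}\,G_n\,M^{1/n}\int p^{1+1/n}\,du,
\]
and Lemma~\ref{lem:density} yields $\int p^{1+1/n}\ge\sqrt{n/(2\pi e r^2)}$. This proves \eqref{eq:average-lower}.

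\emph{Step 4: the deterministic case.} Run $A$ on $z=0$, and let $L_i=\psi_i(0,\dots,0)$ be the functionals it uses, orthonormalized as in Step~1. Every $z$ in the nonadaptive fibre $E_0=\{z:\langle\ell_i,z\rangle=0,\ i\le n\}$ produces the same information as $z=0$, so $A$ is constant on $K\cap E_0$. By Step~2,
\[
\sup_K\norm{Tz-A(z)}_2\ \ge\ \omega_n^{-1/n}\,G_n\,V_0^{1/n},
\]
with $V_0=\vol_n(K\cap E_0)$. Now consider the nonadaptive marginal density $p(u)=\vol_n(K\cap E_u)/M$ for these fixed $\ell_i$. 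By Brunn's principle, $p^{1/n}$ is concave on its convex support. Because the centroid of $K$ is zero, $p$ has mean $0$. Jensen's inequality then gives
\[
\int p^{1+1/n}=\E_p\bigl[p^{1/n}\bigr]\le p(0)^{1/n}=\Bigl(\frac{V_0}{M}\Bigr)^{1/n}.
\]
Combining this with Lemma~\ref{lem:density} yields \eqref{eq:det-lower}, even without the factor $n/(n+1)$ appearing in $\gamma_n$.
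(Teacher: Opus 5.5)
Your proposal is correct and follows the paper's proof essentially step for step: the adaptive Gram--Schmidt fibration with Fubini, the min-max Jacobian bound $J(T|_W)\ge G_n(T)^n$, the volume-to-distance estimate on each fibre, Lemma~\ref{lem:density} for the averaged case, and a central section with Brunn's concavity principle and Jensen's inequality for arbitrary deterministic algorithms. The only deviation is that on the central fibre you bound the supremum directly by the volume of the enclosing ball rather than by the fibre average, which improves the constant by the factor $(n+1)/n$.
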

\begin{proof}
We may assume $G_n(T)>0$, so $T$ is injective.
For every $n$-dimensional subspace $W$, the volume Jacobian 
$J(T|_W)=\prod_{j=1}^n\sigma_j(T|_W)$, where $T|_W$
	is regarded as a map from the Euclidean space $W$ to $\R^{2n}$,
satisfies by the min-max principle that
\begin{equation}\label{eq:jacobian}
 J(T|_W)\ge\prod_{j=n+1}^{2n}\sigma_j(T)=G_n(T)^n.
\end{equation}
Hence $\vol_n(T(E)) \ge G_n(T)^n \vol_n(E)$ for measurable $E$ in a translate of $W$.

We also use an elementary volume estimate. If $Z$ is uniform on a set
of volume $v$ in an $n$-dimensional affine Euclidean space, then for
every point $c$ in the ambient space,
\begin{equation}\label{eq:distance-volume}
 \E\norm{Z-c}_2
 \ge\int_0^{(v/\omega_n)^{1/n}}
          \left(1-\frac{\omega_nt^n}{v}\right)\,dt
 =\frac n{n+1}(v/\omega_n)^{1/n}
\end{equation}
since a ball of radius $t$ meets the affine space in volume at most
$\omega_nt^n$.

\smallskip
\noindent\emph{Measurable adaptive algorithms.}
Successive orthogonalization simulates the observations of $A$ by
\[
 y_i=\langle u_i(y_1,\ldots,y_{i-1}),z\rangle,\qquad i=1,\ldots,n,
\]
where the vectors $u_i$ are orthonormal along each information path.
Subtract the already observed components of each proposed functional
and normalize the remaining coefficient vector. If that vector is zero,
the answer is already known. Append unused orthogonal directions when
necessary; the reconstruction ignores their answers. A fixed pivot rule
in Gram--Schmidt makes these choices measurable.

For a complete answer vector $y$, put
\[
 a_y=\sum_{i=1}^n y_iu_i(y_1,\ldots,y_{i-1}),\qquad
 W_y=\spanop\{u_1,\ldots,u_n\}^{\perp},\qquad L_y=a_y+W_y.
\]
The inputs producing $y$ are exactly the points of $L_y$. Successive
orthogonal decompositions give, for every nonnegative measurable $h$,
\begin{equation}\label{eq:adaptive-Fubini}
 \int_{\R^{2n}}h(z)\,dz
 =\int_{\R^n}\int_{W_y}h(a_y+w)\,dw\,dy.
\end{equation}
At each step the previous answers are fixed before making the next
orthogonal change of coordinates. Thus~\eqref{eq:adaptive-Fubini} follows by
successive applications of ordinary Fubini; only measurability of the
decision rules is required.
Measurable orthonormal frames identify the spaces $W_y$ with $\R^n$.

Consequently, for uniform input on $K$, the answers have density
\begin{equation}\label{eq:answer-density}
 p(y)=v(y)/M,\qquad v(y)=\vol_n(K\cap L_y),
\end{equation}
and, for almost every $y$ with $v(y)>0$, the conditional input law is
uniform on $K\cap L_y$. Since $K$ is bounded, $p$ is bounded and has
bounded support. Also $\norm y_2^2\le\norm z_2^2$, so
$\int\norm y_2^2p(y)\,dy\le r^2$.
Write $A(z)=c_y$ for $z\in L_y$, where $c_y$ is the reconstruction for
the answers $y$. Applying~\eqref{eq:adaptive-Fubini} to
$h(z)=\one_K(z)\norm{Tz-A(z)}_2$ and using~\eqref{eq:answer-density}, we
get
\begin{align*}
\frac1M\int_K\norm{Tz-A(z)}_2\,dz
&=\frac1M\int_{\R^n}\int_{K\cap L_y}\norm{Tz-c_y}_2\,dz\,dy\\
&=\int_{\{v>0\}}p(y)\,\frac1{v(y)}\int_{K\cap L_y}\norm{Tz-c_y}_2\,dz\,dy .
\end{align*}
For fixed $y$, the inner average is $\E\norm{TZ-c_y}_2$ with $Z$ uniform
on $K\cap L_y$. 
Since $TZ$ is uniform on $T(K\cap L_y)$, a subset of the
$n$-dimensional affine space $T(L_y)$ which by~\eqref{eq:jacobian} has volume 
at least $G_n(T)^nv(y)$, inequality \eqref{eq:distance-volume} and then
Lemma~\ref{lem:density} give
\begin{align*}
	\frac1M\int_K\norm{Tz-A(z)}_2\,dz
	&\ge\frac n{n+1}\omega_n^{-1/n}G_n(T)
	\int p(y)v(y)^{1/n}\,dy\\
	&=\frac n{n+1}\omega_n^{-1/n}G_n(T)M^{1/n}
	\int p(y)^{1+1/n}\,dy\\
	&\ge\gamma_n\frac{M^{1/n}}r G_n(T).
\end{align*}

\smallskip
\noindent\emph{Arbitrary deterministic algorithms.}
Assume now that the center of mass of $K$ is zero. Fix the functionals
selected by $A$ at the input $0$, and choose an $n$-dimensional subspace
$W$ in their common kernel. All points of $K\cap W$ produce the same
answers as $0$, and therefore have the same reconstruction $c=A(0)$.
Let $Z$ be uniformly
distributed on $K\cap W$. Since $A(Z)=c$
and $TZ$ is uniformly distributed on $T(K\cap W)$, a set of
$n$-dimensional volume at least
$G_n(T)^n\vol_n(K\cap W)$, relation
\eqref{eq:distance-volume} gives
\begin{equation}\label{eq:central-error}
 \sup_{z\in K}\norm{Tz-A(z)}_2 
\ge\E\norm{TZ-c}_2
 \ge\frac n{n+1}\omega_n^{-1/n}G_n(T)
                    \vol_n(K\cap W)^{1/n}.
\end{equation}

For $y\in W^\perp$, let $K_y=\{w\in W:y+w\in K\}$,
$v(y)=\vol_n(K_y)$ and $p(y)=v(y)/M$.
The Brunn--Minkowski inequality implies that $v^{1/n}$ is concave on
the orthogonal projection of $K$ onto $W^\perp$: convexity gives
$(1-t)K_y+tK_{y'}\subset K_{(1-t)y+ty'}$, and Brunn--Minkowski applies
in $W$; see~\cite{Gardner2002}.
Since the center of mass of $K$ is zero, $\int yp(y)\,dy=0$.
Jensen's inequality now gives
\begin{equation}\label{eq:center-Jensen}
 v(0)^{1/n}\ge\int v(y)^{1/n}p(y)\,dy
       =M^{1/n}\int p(y)^{1+1/n}\,dy.
\end{equation}
Again $p$ is bounded with bounded support. 
Moreover, $p$ is the density
of $P_{W^\perp}z$ for $z$ uniformly distributed on $K$, so by Fubini
\[
\int\norm y_2^2p(y)\,dy=\frac1M\int_K\norm{P_{W^\perp}z}_2^2\,dz
\le\frac1M\int_K\norm z_2^2\,dz=r^2 .
\]
Substitution of
\eqref{eq:center-Jensen} and Lemma~\ref{lem:density} into
\eqref{eq:central-error} proves~\eqref{eq:det-lower}.
Only one information path was fixed, so no measurability of $A$ was used.
\end{proof}

\section{The uniform distribution on the simplex}
\label{sec:simplex-measure}

Return to the fixed simplex $\Delta$ chosen in~\eqref{eq:choice}, 
and consider the uniform distribution~$\mu_\Delta$ on it, which is
defined by 
\begin{equation}\label{eq:initial-measure}
\displaystyle
 \mu_\Delta(E)=\frac{\vol_\Delta(E\cap\Delta)}{\vol_\Delta(\Delta)},
\end{equation}
where $\vol_\Delta$ is any Lebesgue measure on the $2n$-dimensional affine space.

Every measurable subset has probability proportional to its
$2n$-dimensional volume. All Lebesgue measures on the same affine space
are proportional, so the normalized measure does not depend on any
choice of coordinates or Euclidean structure. Once the simplex has
been fixed, its uniform probability measure is fixed as well.

Equivalently, choose the barycentric coordinates jointly uniformly from
\begin{equation}\label{eq:barycentric}
 \Sigma_N=\{\lambda\in\R^N:\lambda_j\ge0,\ 
                         \textstyle\sum_{j=0}^{2n}\lambda_j=1\},
 \qquad f=\sum_{j=0}^{2n}\lambda_jf_j.
\end{equation}
The affine bijection in~\eqref{eq:barycentric} has constant volume
Jacobian, and hence sends normalized volume on $\Sigma_N$ to
$\mu_\Delta$. In particular, the measure is chosen independently of
the algorithm and its random seed.

To apply Proposition~\ref{prop:uniform}, center the coefficient simplex:
\begin{equation}\label{eq:coefficient-simplex}
 K=\Sigma_N-\frac1N\one\subset H_0,\qquad
 f=\bar f+Vz\quad(z\in K).
\end{equation}
The Euclidean structure on $H_0$ is only a convenient coordinate system
for the analysis of the already defined measure~\eqref{eq:initial-measure}.

\begin{lemma}\label{lem:simplex-moments}
For the body $K$ in~\eqref{eq:coefficient-simplex}, the center of mass is
zero and
\begin{equation}\label{eq:simplex-moments}
 M=\frac{\sqrt{2n+1}}{(2n)!},\qquad
 r^2=\frac{2n}{(2n+1)(2n+2)}
\end{equation}
with $M$ and $r$ from~\eqref{eq:M-r}.
\end{lemma}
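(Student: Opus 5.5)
Since $K=\Sigma_N-\frac1N\one$ lies in the $2n$-dimensional space $H_0$, I will compute all three quantities (center of mass, $M$, $r^2$) by transferring to the standard simplex in $\R^{2n}$ and using the Dirichlet moment formulas. Parametrize $\Sigma_N$ by $(\lambda_1,\ldots,\lambda_{2n})\in S=\{\lambda_j\ge0,\ \sum_{j\ge1}\lambda_j\le1\}$, with $\lambda_0=1-\sum_{j\ge1}\lambda_j$. This affine map $\R^{2n}\to\aff\Sigma_N$ has linear part $W$, where $W$ sends $e_j\mapsto e_j-e_0$; it is the same $W$ as in the proof of Lemma~\ref{lem:singular}. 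Its volume Jacobian is $\sqrt{\det(W^*W)}=\sqrt{\det(I+\one\one^t)}=\sqrt{N}$, as computed there. Because $\vol_{2n}(S)=1/(2n)!$, we get $M=\sqrt{2n+1}/(2n)!$. Translation by $-\frac1N\one$ preserves volume.

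For the center of mass, the uniform measure on $\Sigma_N$ is invariant under the coordinate permutations, which are isometries of $\R^N$ preserving $\Sigma_N$. Hence $\E\lambda_j$ is the same for all $j$. Since $\sum_j\lambda_j=1$, this forces $\E\lambda_j=1/N$. So the barycenter of $\Sigma_N$ is $\frac1N\one$, and that of $K$ is $0$. The constant Jacobian guarantees that uniform measure on $S$ pushes forward to uniform measure on $\Sigma_N$, so expectations can equally be computed on $S$.

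For $r^2$, write $z=\lambda-\frac1N\one$, so that
\[
\norm z_2^2=\sum_{j=0}^{2n}\lambda_j^2-\frac1N .
\]
By symmetry this gives $r^2=N\,\E\lambda_0^2-\frac1N$. The Dirichlet integral $\int_S\lambda_1^2\,d\lambda=2/(2n+2)!$ (the case of exponents $(2,0,\ldots,0)$ in $\int_S\prod\lambda_j^{a_j}=\prod a_j!/(2n+\sum a_j)!$) yields
\[
\E\lambda_1^2=(2n)!\cdot\frac{2}{(2n+2)!}=\frac{2}{(2n+1)(2n+2)}=\frac{2}{N(N+1)},
\]
and by symmetry $\E\lambda_0^2$ is the same. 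Therefore
\[
r^2=\frac{2}{N+1}-\frac1N=\frac{N-1}{N(N+1)}=\frac{2n}{(2n+1)(2n+2)}.
\]

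The only step requiring care is the Jacobian $\sqrt N$ in $M$. That step is already done in the proof of Lemma~\ref{lem:singular}, so the remaining work is routine bookkeeping. The Dirichlet formula can be taken as standard, or checked by induction on the dimension.
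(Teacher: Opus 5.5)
Your proposal is correct and follows the paper's proof: the volume comes from the Gram determinant $N$ of the edge vectors $e_j-e_0$, the center of mass from permutation symmetry, and $r^2=2/(N+1)-1/N$ from $\E\lambda_j^2=2/(N(N+1))$. The only difference is how you get that second moment: you quote the Dirichlet integral on the standard simplex, whereas the paper derives it directly from the marginal density $(N-1)(1-t)^{N-2}$ of $\lambda_j$, obtained by slicing $\Sigma_N$ into scaled copies of a facet.
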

\begin{proof}
The edge vectors $e_j-e_0$, $1\le j\le2n$, have Gram determinant $N$,
which proves the volume formula. Symmetry under coordinate permutations
gives $\E\lambda_j=1/N$. 
For fixed $j$ and $0\le t \le 1$, the sets $\{\lambda\in\Sigma_N:\lambda_j=t\}$ 
are copies of the $(N-2)$-dimensional face $\{\lambda_j=0\}$ scaled by $1-t$. 
By Fubini,
$\lambda_j$ therefore has density proportional to $(1-t)^{N-2}$, and hence equal to
$(N-1)(1-t)^{N-2}$. 
Integrating gives
$\E\lambda_j^2=2/[N(N+1)]$, and therefore
\[
 r^2=\E\sum_{j=0}^{2n}(\lambda_j-1/N)^2
     =\frac{2}{N+1}-\frac1N
     =\frac{N-1}{N(N+1)}.
\]
The center of mass assertion follows at the same time.
\end{proof}

Using $Q_n$ from~\eqref{eq:G} and $\gamma_n$ from~\eqref{eq:gamma}, put
\begin{equation}\label{eq:Cn}
 C_n=\frac{2Q_nr}{\sqrt n\,\gamma_nM^{1/n}}.
\end{equation}
Appendix~\ref{sec:constant} proves $C_n<27$ for every $n\ge1$ using
Stirling bounds and two direct small-dimension calculations.

\section{Proof of the main result}  \label{sec:proof}

The inequality $\enon_{2n-1}\le a_{2n-1}$ follows from the finite-rank
representation of affine algorithms. If $D_{2n}=0$,
the exact affine algorithm described in Section~\ref{sec:det} proves the theorem.
Otherwise fix $0<\varepsilon<1$, choose the simplex and contraction
by~\eqref{eq:choice}, and then choose its uniform measure
by~\eqref{eq:initial-measure}. Use $K,V,T,B$ from~\eqref{eq:coordinates}
and \eqref{eq:coefficient-simplex}.

For an algorithm $A$ on $F$, put
\[
 \widetilde A(z)=BA(\bar f+Vz)-BS\bar f.
\]
Each measurement of $\bar f+Vz$ is a known constant plus a linear
functional of $z$. Thus $\widetilde A$ uses at most $n$ linear
measurements. Since $B$ is a contraction on all of $Y$,
\begin{equation}\label{eq:transfer}
 \norm{Tz-\widetilde A(z)}_2
 \le\norm{S(\bar f+Vz)-A(\bar f+Vz)}_Y.
\end{equation}
For every measurable deterministic algorithm,
Proposition~\ref{prop:uniform} gives
\begin{equation}\label{eq:fixed-prior-lower}
 \int_\Delta\norm{Sf-A(f)}_Y\,d\mu_\Delta(f)
  \ge \frac1M \int_K \norm{Tz-\widetilde A(z)}_2\,dz 
 \ge\gamma_n\frac{M^{1/n}}r G_n(T).
\end{equation}
For a randomized algorithm $(A_\omega)$, apply~\eqref{eq:fixed-prior-lower} to each seed and
use Tonelli:
\begin{align*}
 \sup_{f\in F}\E_\omega\norm{Sf-A_\omega(f)}_Y
 \ge\E_\omega\int_\Delta\norm{Sf-A_\omega(f)}_Y\,d\mu_\Delta(f)
 \ge\gamma_n\frac{M^{1/n}}r G_n(T).
\end{align*}
Taking the infimum gives this lower bound for $\eran_n(S,F)$.

For an arbitrary deterministic algorithm, the center of mass of $K$ is
zero by Lemma~\ref{lem:simplex-moments}. Equations~\eqref{eq:det-lower}
and \eqref{eq:transfer} give the same lower bound for its worst-case
error. Consequently,
\begin{equation}\label{eq:both-lower}
 \min\{\eran_n(S,F),\eada_n(S,F)\}
 \ge\gamma_n\frac{M^{1/n}}r G_n(T).
\end{equation}
Combining Lemma 
\ref{lem:singular}
with~\eqref{eq:both-lower} and \eqref{eq:Cn} 
and letting $\varepsilon\downarrow0$,
we obtain
\begin{equation}\label{eq:main-lower}
 \min\{\eran_n(S,F),\eada_n(S,F)\}
 \ge \frac2{C_n\sqrt{n}} \left(\frac{D_{2n}}{D_n}\right)^{1/n}.
\end{equation}
Together with Lemma~\ref{lem:interpolation}, this gives
\[
 a_{2n-1}(S,F)\le
 C_n n 
       \min\{\eran_n(S,F),\eada_n(S,F)\}.
\]
For Hilbert targets, Lemma~\ref{lem:interpolation} omits $2\sqrt n$,
so the coefficient becomes $(C_n/2)\sqrt n$.
Finally, we use $C_n<27$.
This proves Theorem~\ref{thm:main}. 

\begin{remark}[A possible improvement of the constant]\label{rem:sixteen}
	It was suggested to the authors, in discussions with the AI systems ChatGPT
	and Claude, that for randomized algorithms the inequality
	\begin{equation}\label{eq:sixteen}
		a_{2n-1}(S,F)\le16n\,\eran_n(S,F),\qquad n\ge6,
	\end{equation}
	can be proven by a more involved analysis based on a different choice of
	the determinants and of the input measure. We did not verify the details.
	Instead of $D_k$, one considers
	\[
	\Gamma_k=\sup_{\substack{f_0,\ldots,f_k\in F\\
			\ell_1,\ldots,\ell_k\in Y',\ \norm{\ell_i}\le1}}
	\bigl|\det[\ell_i S(f_j-f_0)]_{i,j=1}^k\bigr|.
	\]
	The constraints on the rows are now individual: $(\ell_i)$ is a contraction
	into $\ell_\infty^k$. In particular, no factor $2\sqrt n$ appears in the
	analogue of Lemma~\ref{lem:interpolation}. Let $f_0,\ldots,f_{2n}\in F$ be
	vertices of an almost maximal configuration for $\Gamma_{2n}$, and let
	$\Delta_\Gamma$ be their convex hull. Instead of the uniform distribution
	on $\Delta_\Gamma$, one uses the following input distribution in
	Bakhvalov's argument. Let $u=3/2$, and let $\xi_0,\ldots,\xi_{2n}$ be
	independent with density
	\[
	q_u(x)=\frac{\phi(x-u)}{\Phi(u)}\one_{[0,\infty)}(x),\qquad
	\Xi=\sum_j\xi_j,\qquad f_\xi=\sum_j\frac{\xi_j}{\Xi}f_j,
	\]
	where $\phi$ and $\Phi$ are the standard normal density and distribution
	function. These are shifted Gaussian weights, conditioned to be
	nonnegative. With $m_u=u+\phi(u)/\Phi(u)=\E\xi_j$, the input measure is
	\begin{equation}\label{eq:weighted-Gaussian-prior}
		\nu(G)=\frac{\E\bigl[\Xi\,\one_{\{f_\xi\in G\}}\bigr]}{(2n+1)\,m_u},
		\qquad G\subset\Delta_\Gamma \text{ measurable}.
	\end{equation}
	It does not depend on the algorithm. The lower bound for the errors
	measured in $\ell_\infty^{2n}$ then uses Ball's bound for sections of the
	cube~\cite{Ball1989}.
\end{remark}

\appendix
\section{The constant 27}\label{sec:constant}

We verify the bound in~\eqref{eq:Cn} analytically. With $N=2n+1$ and
$\omega_n=\pi^{n/2}/\Gamma(n/2+1)$, substitution gives
\begin{equation}\label{eq:C-explicit}
 C_n=2Q_n\sqrt{\frac{2\pi e(n+1)}{n^3}}\,
       \frac{[\omega_n(2n)!]^{1/n}}{N^{1/2+1/(2n)}}.
\end{equation}
We use the standard Stirling bounds, valid for $x>0$,
\begin{equation}\label{eq:stirling}
 \sqrt{2\pi x}(x/e)^x\le\Gamma(x+1)
 \le\sqrt{2\pi x}(x/e)^xe^{1/(12x)};
\end{equation}
see~\cite[Section~5.11(ii)]{DLMF}. They give
\begin{equation}\label{eq:factorial-bound}
 [\omega_n(2n)!]^{1/n}
 \le\sqrt{\frac{2\pi e}{n}}\left(\frac{2n}{e}\right)^2
                     2^{1/n}e^{1/(24n^2)}.
\end{equation}

The elementary bound $\binom{2n}{n}\le4^n/\sqrt{3n+1}$ follows by
induction: it is an equality for $n=1$, and the induction step is
\[
 \frac{(2n+1)^2(3n+4)}{4(n+1)^2(3n+1)}\le1.
\]
Since $\binom{2n+1}{n+1}<2\binom{2n}{n}$, this implies
\begin{equation}\label{eq:Q-bound}
 Q_n\le2\left(\frac2{\sqrt{3n+1}}\right)^{1/(2n)}.
\end{equation}
Put $C_*=16\pi\sqrt2/e$. Substitution of~\eqref{eq:factorial-bound}
and \eqref{eq:Q-bound} into~\eqref{eq:C-explicit} gives
\[
 \frac{C_n}{C_*}\le
 \sqrt{\frac{2(n+1)}{2n+1}}\,
 \left(\frac8{(2n+1)\sqrt{3n+1}}\right)^{1/(2n)}e^{1/(24n^2)}.
\]
Using $\log(1+t)\le t$, we obtain
\[
 \log\frac{C_n}{C_*}\le\frac1n\left\{
 \frac14+\frac12\log\frac8{(2n+1)\sqrt{3n+1}}+\frac1{24n}\right\}.
\]
For $n\ge3$, we have
$(2n+1)\sqrt{3n+1}\ge7\sqrt{10}>22>8e$.
Thus the expression in braces is less than
$1/4-1/2+1/72<0$, and $C_n<C_*$. Moreover,
\[
 C_*<16\cdot\frac{22}{7}\cdot\frac{10}{7}\cdot\frac38
       =\frac{1320}{49}<27,
\]
using $\pi<22/7$, $\sqrt2<10/7$ and $e>8/3$.

It remains to consider $n=1,2$. Direct substitution in~\eqref{eq:C-explicit}
gives
\[
 C_1=16\sqrt{\frac{\pi e}{3}}<27,
 \qquad
 C_2^2=\frac{72\sqrt2}{5}\pi^2e<27^2.
\]
For the first inequality, the rational bounds $\pi<355/113$ and
$e<87/32$ give $C_1^2<82360/113<729$.
For the second, $\sqrt2<3/2$, $\pi<22/7$ and $e<11/4$ give
$C_2^2<143748/245<729$. This proves $C_n<27$ for every $n\ge1$.

\end{document}